\newtheorem{theorem}{Theorem}
\newtheorem{corollary}[theorem]{Corollary}
\newtheorem{definition}[theorem]{Definition}
\newtheorem{proposition}[theorem]{Proposition}
\newenvironment{proof}[1][Proof]{\noindent \textbf{#1.} }{\  \rule{0.5em}{0.5em}}
\begin{document}

\title{Focal Representation of $k$-slant Helices in $\mathbb{E}^{m+1}$}
\author{G\"{u}nay \"{O}zt\"{u}rk, Bet\"{u}l Bulca, Beng\"{u} Bayram \& Kadri
Arslan}
\maketitle

\begin{abstract}
\ A focal representation of a generic regular curve $\gamma $ in $\mathbb{E}%
^{m+1}$ consists of the centers of the osculating hyperplanes. A $k$-slant
helix $\gamma $ in $\mathbb{E}^{m+1}$ is a (generic) regular curve whose
unit normal vector $V_{k}$ makes a constant angle with a fixed direction $%
\overrightarrow{U}$ in $\mathbb{E}^{m+1}$. \ In the present paper we proved
that if $\gamma $ is a $k$-slant helix in $\mathbb{E}^{m+1},$ then the focal
representation $C_{\gamma }$ of $\gamma $ in $\mathbb{E}^{m+1}$ is a $%
(m-k+2) $-slant helix in $\mathbb{E}^{m+1}.$
\end{abstract}

\section{Introduction}

\footnote{%
2000 Mathematics Subject Classifications: 53A04, 53C42
\par
Key words and phrases: Frenet curve, Focal curve, Slant helix.
\par
{}} Curves with constant slope, or so-called general helices (inclined
curves), are well-known curves in the classical differential geometry of
space curves. They are defined by the property that the tangent makes a
constant angle with a fixed line (the axis of the general helix) (see, \cite%
{Al}, \cite{Ba}, \cite{CIKH} and \cite{G}). In \cite{Ha}, the definition is
more restrictive: the fixed direction makes constant angle with these all
the vectors of the Frenet frame. It is easy to check that the definition
only works in the odd dimensional case. Moreover, in the same reference, it
is proven that the definition is equivalent to the fact the ratios $\frac{%
\kappa _{2}}{\kappa _{1}},\frac{\kappa _{4}}{\kappa _{3}},...$ , $\kappa _{i%
\text{ }}$being the curvatures, are constant. Further, J. Monterde has
considered the Frenet curves in $\mathbb{E}^{m}$ which have constant
curvature ratios (i.e., $\frac{\kappa _{2}}{\kappa _{1}},\frac{\kappa _{3}}{%
\kappa _{2}},\frac{\kappa _{4}}{\kappa _{3}}...$ are constant) \cite{Mo}.
The Frenet curves with constant curvature ratios are called ccr-curves.
Obviously, ccr-curves are a subset of generalized helices in the sense of 
\cite{Ha}. It is well known that \textit{curves with constant curvatures (}$%
W $-curves\textit{) }are\textit{\ }well-known\textit{\ }ccr-curves \cite{KL}%
, \cite{OAH}.

Recently, Izumiya and Takeuchi have introduced the concept of slant helix in
Euclidean $3$-space $\mathbb{E}^{3}$ by satisfying that the normal lines
make a constant angle with a fixed direction \cite{IT}. Further in \cite{AT}
Ali and Turgut considered the generalization of the concept of slant helix
to Euclidean $n$-space $\mathbb{E}^{n}$, and gave some characterizations for
a non-degenerate slant helix. As a future work they remarked that it is
possible to define a slant helix of type-k as a curve whose unit normal
vector $V_{k}$ makes a constant angle with a fixed direction $%
\overrightarrow{U}$ \cite{GCH}.

For a smooth curve (a source of light) $\gamma $ in $\mathbb{E}^{m+1}$, the
caustic of $\gamma $ $($defined as the envelope of the normal lines of $%
\gamma )$ is a singular and stratified hypersurface. The focal curve of $%
\gamma $ , $C_{\gamma }$ , is defined as the singular stratum of dimension $%
1 $ of the caustic and it consists of the centres of the osculating
hyperspheres of $\gamma $ . Since the centre of any hypersphere tangent to $%
\gamma $ at a point lies on the normal plane to $\gamma $ at that point, the
focal curve of $\gamma $ may be parametrised using the Frenet frame $\left(
t,n_{1},n_{2},...,n_{m}\right) $ of $\gamma $ as follows: 
\begin{equation*}
C_{\gamma }(\theta )=(\gamma +c_{1}n_{1}+c_{2}n_{2}+...+c_{m}n_{m})(\theta ),
\end{equation*}%
where the coefficients $c_{1},...,c_{m}$ are smooth functions that are
called focal curvatures of $\gamma $ \cite{V3}.

This paper is organized as follows: Section $2$ gives some basic concepts of
the Frenet curves in $\mathbb{E}^{m+1}$. Section $3$ tells about the focal
representation of a generic curve given with a regular parametrization in $%
\mathbb{E}^{m+1}$. Further this section provides some basic properties of
focal curves in $\mathbb{E}^{m+1}$ and the structure of their curvatures. In
the final section we consider k-slant helices in $\mathbb{E}^{m+1}$. We
prove that if $\gamma $ is a $k$-slant helix in $\mathbb{E}^{m+1}$ then the
focal representation $C_{\gamma }$ of $\gamma $ is $(m-k+2)$-slant helix in $%
\mathbb{E}^{m+1}.$

\section{Basic Concepts}

Let $\gamma =\gamma (s):I\rightarrow \mathbb{E}^{m+1}$ be a regular curve in 
$\mathbb{E}^{m+1}$, $($i.e., $\left \Vert \gamma ^{\prime }(s)\right \Vert $
is nowhere zero$)$ where $I$ is interval in $\mathbb{R}$. Then $\gamma $ is
called a \textit{Frenet curve of \ osculating order }$d,$ $(2\leq d\leq m+1)$
if $\gamma ^{\text{ }\prime }(s),$ $\gamma ^{\text{ }\prime \prime }(s),$...,%
$\gamma ^{(d)}(s)$ are linearly independent and $\gamma ^{\text{ }\prime
}(s),$ $\gamma ^{\text{ }\prime \prime }(s),$...,$\gamma ^{(d+1)}(s)$ are no
longer linearly independent for all $s$ in $I$ \cite{V3}. In this case, $%
Im(\gamma )$ lies in an $d$-dimensional Euclidean subspace of $\mathbb{E}%
^{m+1}.$ To each Frenet curve of rank $d$ there can be associated
orthonormal $d$-frame $\left \{ t,n_{1},...,n_{d-1}\right \} $ along $\gamma
,$ the Frenet $r$-frame, and $d-1$ functions $\kappa _{1},\kappa
_{2},...,\kappa _{d-1}$:$I\longrightarrow \mathbb{R}$, the Frenet curvature,
such that%
\begin{equation}
\left[ 
\begin{array}{c}
t^{^{\prime }} \\ 
n_{1}^{^{\prime }} \\ 
n_{2}^{^{\prime }} \\ 
... \\ 
n_{d-1}^{^{\prime }}%
\end{array}%
\right] =v\left[ 
\begin{array}{ccccc}
0 & \kappa _{1} & 0 & ... & 0 \\ 
-\kappa _{1} & 0 & \kappa _{2} & ... & 0 \\ 
0 & -\kappa _{2} & 0 & ... & 0 \\ 
... &  &  &  & \kappa _{d-1} \\ 
0 & 0 & ... & -\kappa _{d-1} & 0%
\end{array}%
\right] \left[ 
\begin{array}{c}
t \\ 
n_{1} \\ 
n_{2} \\ 
... \\ 
n_{d-1}%
\end{array}%
\right]  \label{A1}
\end{equation}%
where, $v$ is the speed of $\gamma .$ In fact, to obtain $%
t,n_{1},...,n_{d-1} $ it is sufficient to apply the Gram-Schmidt
orthonormalization process to $\gamma ^{\prime }(s),$ $\gamma ^{\prime
\prime }(s),$...,$\gamma ^{(d)}(s)$. Moreover, the functions $\kappa
_{1},\kappa _{2},...,\kappa _{d-1}$ are easily obtained as by-product during
this calculation. More precisely, $t,n_{1},...,n_{d-1}$ and $\kappa
_{1},\kappa _{2},...,\kappa _{d-1}$ are determined by the following formulas:%
\begin{eqnarray}
v_{1}(s) &:&=\gamma ^{\text{ }\prime }(s)\  \  \text{\  \ };t:=\frac{v_{1}(s)}{%
\left \Vert v_{1}(s)\right \Vert },  \notag \\
v_{\alpha }(s) &:&=\gamma ^{(\alpha )}(s)-\sum_{i=1}^{\alpha -1}<\gamma
^{(\alpha )}(s),v_{i}(s)>\frac{v_{i}(s)}{\left \Vert v_{i}(s)\right \Vert
^{2}},  \label{A2} \\
\kappa _{\alpha -1}(s) &:&=\frac{\left \Vert v_{\alpha }(s)\right \Vert }{%
\left \Vert v_{\alpha -1}(s)\right \Vert \left \Vert v_{1}(s)\right \Vert },
\notag \\
n_{\alpha -1} &:&=\frac{v_{\alpha }(s)}{\left \Vert v_{\alpha }(s)\right
\Vert },  \notag
\end{eqnarray}%
where $\alpha \in \left \{ 2,3,...,d\right \} $ (see, \cite{G})$.$

A Frenet curve of rank $d$ for which $\kappa _{1},\kappa _{2},...,\kappa
_{d-1}$ are constant is called (generalized) screw line or helix \cite{CDV}.
Since these curves are trajectories of the $1$-parameter group of the
Euclidean transformations, so, F. Klein and S. Lie called them $W$\textit{%
-curves} \cite{KL}. For more details see also \cite{Ch}. $\gamma $ is said
to have constant curvature ratios (that is to say, it is a ccr-curve) if all
the quotients $\frac{\kappa _{2}}{\kappa _{1}},\frac{\kappa _{3}}{\kappa _{2}%
},\frac{\kappa _{4}}{\kappa _{3}},...,\frac{\kappa _{i}}{\kappa _{i-1}}$ $%
(1\leq i\leq m-1$) are constant \cite{Mo}, \cite{OAH}.

\section{The Focal Representation of a Curve in $\mathbb{E}^{m+1}$}

The hyperplane normal to $\gamma $ at a point is the union of all lines
normal to $\gamma $ at that point. The envelope of all hyperplanes normal to 
$\gamma $ is thus a component of the focal set that we call the main
component (the other component is the curve $\gamma $ itself, but we will
not consider it) \cite{V1}.

\begin{definition}
Given a generic curve (i.e., a Frenet curve of osculating orde $m+1$) $%
\gamma :\mathbb{R}\rightarrow \mathbb{E}^{m+1},$ let $F:\mathbb{E}%
^{m+1}\times \mathbb{R}\rightarrow \mathbb{R}$ be the $(m+1)$-parameter
family of real functions given by%
\begin{equation}
F(q,\theta )=\frac{1}{2}\left \Vert q-\gamma (\theta )\right \Vert ^{2}.
\label{a2}
\end{equation}%
The caustic of the family $F$ is given by the set%
\begin{equation}
\left \{ q\in \mathbb{E}^{m+1}:\exists \theta \in \mathbb{R}:F_{q}^{\prime
}(\theta )=0\text{ and }F_{q}^{\prime \prime }(\theta )=0\right \} .
\label{a3}
\end{equation}%
\cite{V1}.
\end{definition}

\begin{proposition}
\cite{V2} The caustic of the family $F(q,\theta )=\frac{1}{2}\left \Vert
q-\gamma (\theta )\right \Vert ^{2}$ coincides with the focal set of the
curve $\gamma :\mathbb{R}\rightarrow \mathbb{E}^{m+1}.$
\end{proposition}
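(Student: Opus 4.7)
The plan is to establish the equality of two sets by translating each of the two caustic conditions $F'_q(\theta)=0$ and $F''_q(\theta)=0$ into a geometric statement about $q$ relative to $\gamma(\theta)$, and then matching the resulting pair of conditions with the analytic description of the envelope of the family of hyperplanes normal to $\gamma$.

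First I would differentiate the family $F(q,\theta)=\tfrac{1}{2}\|q-\gamma(\theta)\|^{2}$ with respect to $\theta$. A direct computation yields
\begin{equation*}
F'_{q}(\theta) = -\langle q-\gamma(\theta),\gamma'(\theta)\rangle,
\qquad
F''_{q}(\theta) = \|\gamma'(\theta)\|^{2} - \langle q-\gamma(\theta),\gamma''(\theta)\rangle.
\end{equation*}
The equation $F'_{q}(\theta)=0$ says exactly that the vector $q-\gamma(\theta)$ is orthogonal to $\gamma'(\theta)$, i.e.\ that $q$ lies on the hyperplane normal to $\gamma$ at the point $\gamma(\theta)$. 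So the first caustic condition is equivalent to membership in a normal hyperplane of $\gamma$.

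Next I would write the family of normal hyperplanes as the zero set of
\begin{equation*}
G(q,\theta) = \langle q-\gamma(\theta),\gamma'(\theta)\rangle,
\end{equation*}
and invoke the classical characterization that the envelope is the locus cut out by $G=0$ together with $\partial_{\theta}G=0$. A direct derivative gives $\partial_{\theta}G(q,\theta) = -\|\gamma'(\theta)\|^{2}+\langle q-\gamma(\theta),\gamma''(\theta)\rangle$, which is precisely $-F''_{q}(\theta)$. Hence the envelope condition is the same as $F''_{q}(\theta)=0$.

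Combining the two translations, a point $q$ belongs to the caustic of $F$ if and only if there exists $\theta$ with $G(q,\theta)=0$ and $\partial_{\theta}G(q,\theta)=0$, which is exactly the defining condition for the envelope of the normal hyperplanes, and hence for the focal set of $\gamma$. The main subtlety — really the only place where care is needed — is to check that the genericity hypothesis on $\gamma$ (Frenet rank $m+1$) ensures $\|\gamma'(\theta)\|\neq 0$ so that both sides of the envelope equation are well-defined, and that the $1$-dimensional stratum of this envelope agrees with the centres of the osculating hyperspheres; this follows because the two conditions together force $q-\gamma(\theta)$ to be a normal vector whose projection onto $\gamma''(\theta)$ is prescribed, which is the first step in the Frenet expansion $q=\gamma+c_{1}n_{1}+\cdots+c_{m}n_{m}$ used later in the paper.
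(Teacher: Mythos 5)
Your argument is correct: the computations $F'_q(\theta)=-\langle q-\gamma(\theta),\gamma'(\theta)\rangle$ and $F''_q(\theta)=\lVert\gamma'(\theta)\rVert^{2}-\langle q-\gamma(\theta),\gamma''(\theta)\rangle$ identify the two caustic conditions with the incidence and tangency conditions $G=0$, $\partial_\theta G=0$ for the family of normal hyperplanes, which is the standard envelope argument. Note that the paper supplies no proof of this proposition at all --- it is imported from \cite{V2} --- so there is nothing internal to compare against; your derivation is the expected one, and your closing remark about genericity guaranteeing $\lVert\gamma'\rVert\neq 0$ is the only point of care, though strictly speaking the paper's ``focal set'' also nominally includes the curve $\gamma$ itself as a degenerate component, a distinction both you and the paper are entitled to gloss over.
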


\begin{definition}
The center of the osculating hypersphere of $\gamma $ at a point lies in the
hyperplane normal to the $\gamma $ at that point. So we can write%
\begin{equation}
C_{\gamma }=\gamma +c_{1}n_{1}+c_{2}n_{2}+\cdots +c_{m}n_{m},  \label{a4}
\end{equation}%
which is called focal curve of $\gamma $, where $c_{1},c_{2},\ldots ,c_{m}$
are smooth functions of the parameter of the curve $\gamma $. We call the
function $c_{i}$ the $i^{th}$ focal curvature of $\gamma .$ Moreover, the
function $c_{1}$ never vanishes and $c_{1}=\frac{1}{\kappa _{1}}$ \cite{V3}.
\end{definition}

The focal curvatures of $\gamma $, parametrized by arc length s, satisfy the
following "scalar Frenet equations" for $c_{{\small m}}\neq 0:$%
\begin{eqnarray}
1 &=&\kappa _{1}c_{1}  \notag \\
c_{1} &=&\kappa _{2}c_{2}  \notag \\
c_{2} &=&-\kappa _{2}c_{{\small 1}}+\kappa _{3}c_{3}  \notag \\
&&\text{...}  \label{a5} \\
c_{m-1} &=&-\kappa _{m-1}c_{m-2}+\kappa _{m}c_{m}  \notag \\
c_{m}-\frac{({R}_{m}^{2}\acute{)}}{2c_{m}} &=&-\kappa _{m}c_{m-1}  \notag
\end{eqnarray}%
where $R_{{\small m}}$\ the radius of the osculating m-sphere. In particular 
$R_{m}^{2}=\left \Vert \QTR{sl}{C}_{\gamma }{-}\gamma \right \Vert ^{2}$ 
\cite{V3}.

\begin{theorem}
\cite{V1} Let $\gamma :s\rightarrow \gamma (s)\in \mathbb{E}^{m+1}$ be a
regular generic curve . Write $\kappa _{1},\kappa _{2},...,\kappa _{m}$ for
its Euclidean curvatures and $\left \{ t,n_{1},n_{2},...,n_{m}\right \} $
for its Frenet Frame. For each $\gamma (s)$ of $\gamma ,$ write $\varepsilon
(s)$ for the sign of $(c_{m}^{\prime }+c_{m-1}\kappa _{m})(s)$ and $\delta
_{\alpha }(s)$ for the sign of $(-1)^{\alpha }\varepsilon (s)\kappa _{m}(s),$
$\alpha =1,\ldots ,m.$ Then the following holds:

a) The Frenet frame $\left \{ T,N_{1},N_{2},...,N_{m}\right \} $ of $%
C_{\gamma }$ at $C_{\gamma }(s)$ is well-defined and its vectors are given
by $T=\varepsilon n_{m},$ $N_{\alpha }=\delta _{\alpha }n_{m-l},$ for $%
l=1,\ldots ,m-1,$ and $N_{m}=\pm t.$ The sign in $\pm t$ is chosen in order
to obtain a positive basis.

b) The Euclidean curvatures $K_{1},K_{2},...,K_{m}$ of the parametrized
focal curve of $\gamma .$ $C_{\gamma }:s\rightarrow C_{\gamma }(s)$, are
related to those of $\gamma $ by:%
\begin{equation}
\frac{K_{1}}{\left \vert \kappa _{m}\right \vert }=\frac{K_{2}}{\kappa _{m-1}%
}=\cdots =\frac{\left \vert K_{m}\right \vert }{\kappa _{1}}=\frac{1}{\left
\vert c_{m}^{\prime }+c_{m-1}\kappa _{m}\right \vert },  \label{a6}
\end{equation}%
the sign of $K_{m}$ is equal to $\delta _{m}$ times the sign chosen in $\pm
t.$
\end{theorem}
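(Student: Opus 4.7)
The plan is to parametrize $C_\gamma$ by the arc length $s$ of $\gamma$, expand the successive derivatives $C_\gamma^{(j)}$ in the Frenet basis $\{t,n_1,\dots,n_m\}$ of $\gamma$, and then apply the Gram--Schmidt recipe from (\ref{A2}). The main structural observation is that the scalar Frenet equations (\ref{a5}) force $C_\gamma^{(j)}$ to lie in $\mathrm{span}\{n_{m-j+1},\dots,n_m\}$, so the Frenet frame of $C_\gamma$ is built from the $n_i$'s taken in reverse order, with signs that can be tracked by an explicit sign calculus.

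For the base step I compute $C_\gamma' = \gamma' + \sum_i (c_i'\, n_i + c_i\, n_i')$, use (\ref{A1}) to expand $n_i'$, and collect coefficients on $\{t,n_1,\dots,n_m\}$. Each line of (\ref{a5}) exactly cancels one coefficient: $1=\kappa_1 c_1$ kills the $t$-component, the interior relations kill the $n_1,\dots,n_{m-1}$ components, and only the $n_m$-coefficient $c_m' + c_{m-1}\kappa_m$ survives. Hence $C_\gamma' = (c_m'+c_{m-1}\kappa_m)\, n_m$, giving $T=\varepsilon n_m$ and parametric speed $|c_m'+c_{m-1}\kappa_m|$. Differentiating again and using $n_m'=-\kappa_m n_{m-1}$ produces an $n_{m-1}$ term in $C_\gamma''$; orthogonalizing against $T$ and normalizing yields $N_1 = \delta_1 n_{m-1}$.

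The inductive step runs on $l$: assume $T,N_1,\dots,N_{l-1}$ are $\varepsilon n_m,\delta_1 n_{m-1},\dots,\delta_{l-1} n_{m-l+1}$. Differentiating $N_{l-1}$ by (\ref{A1}) produces an $n_{m-l+2}$ contribution, which already lies in the span of the previously constructed frame vectors, and a $-\delta_{l-1}\kappa_{m-l+1}\, n_{m-l}$ contribution orthogonal to them; after normalization the latter becomes $N_l$. The sign comes out as predicted because $\delta_l \delta_{l-1} = -1$ (immediate from $\delta_\alpha = \mathrm{sign}((-1)^\alpha \varepsilon \kappa_m)$), together with the positivity of the intermediate curvatures $\kappa_2,\dots,\kappa_{m-1}$, gives $\mathrm{sign}(-\delta_{l-1}\kappa_{m-l+1}) = \delta_l$ for $2 \le l \le m-1$. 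The terminal case $l=m$ yields $N_m = \pm t$, with the sign fixed by the orientation convention. Finally, reading off the coefficient of $N_l$ in the Frenet equation $N_{l-1}' = v(C_\gamma)(-K_{l-1}N_{l-2}+K_l N_l)$ and comparing with the explicit expansion recovers $K_l/\kappa_{m-l+1} = 1/|c_m'+c_{m-1}\kappa_m|$, which is exactly (\ref{a6}).

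The hardest part will be the sign bookkeeping: the expressions for $\varepsilon$ and $\delta_\alpha$ are engineered so that every sign in the induction lines up, but cleanly verifying this requires repeated use of $\delta_l\delta_{l-1}=-1$ and careful attention to which $\kappa_i$ are positive by convention ($1\le i \le m-1$) versus signed ($i=m$). A secondary technicality is the non-vanishing assumptions $c_m \neq 0$ and $c_m' + c_{m-1}\kappa_m \neq 0$, which are needed for the Frenet frame of $C_\gamma$ to be well-defined; these are precisely the genericity hypotheses built into the statement.
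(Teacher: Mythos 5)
The paper contains no proof of this theorem: it is imported wholesale from Uribe--Vargas \cite{V1}, so there is nothing internal to compare your argument against. Judged on its own, your proposal is correct and is essentially the standard argument for this result. The first $m$ relations in (\ref{a5}) are precisely the vanishing of the $t,n_{1},\dots ,n_{m-1}$ components of $C_{\gamma }^{\prime }$, so $C_{\gamma }^{\prime }=(c_{m}^{\prime }+c_{m-1}\kappa _{m})\,n_{m}$, and your induction (differentiate the last frame vector obtained, discard the component already spanned, normalize the new one) reverses the frame with exactly the signs $\varepsilon $ and $\delta _{\alpha }$ and yields (\ref{a6}). Three remarks. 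First, (\ref{a5}) as printed in the paper has dropped the primes on its left-hand sides; your computation tacitly uses the correct differential form $c_{1}^{\prime }=\kappa _{2}c_{2}$, $c_{i}^{\prime }=-\kappa _{i}c_{i-1}+\kappa _{i+1}c_{i+1}$, which is what the source actually asserts, and you should say so explicitly since otherwise the cancellation you invoke does not happen. Second, your sign bookkeeping ($\delta _{l}\delta _{l-1}=-1$, hence the sign of $-\delta _{l-1}\kappa _{m-l+1}$ is $\delta _{l}$) requires $\kappa _{m}\neq 0$ and $\kappa _{2},\dots ,\kappa _{m-1}>0$; both are part of the genericity hypothesis, as is $c_{m}^{\prime }+c_{m-1}\kappa _{m}\neq 0$, which you correctly identify as what makes the Frenet frame of $C_{\gamma }$ well-defined and the Gram--Schmidt leading coefficients nonzero. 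Third, your indexing $N_{l}=\delta _{l}n_{m-l}$ is the intended reading of the statement's garbled $N_{\alpha }=\delta _{\alpha }n_{m-l}$, $l=1,\dots ,m-1$.
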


That is the Frenet formulas of $C_{\gamma }$ at $C_{\gamma }(s)$ is%
\begin{eqnarray}
T^{^{\prime }} &=&\frac{1}{A}\left \vert {\small \kappa }_{m}\right \vert
N_{1}  \notag \\
N_{1}^{^{\prime }} &=&\frac{1}{A}\left( {\small -}\left \vert {\small \kappa 
}_{m}\right \vert T+{\small \kappa }_{m-1}N_{2}\right)  \notag \\
N_{2}^{^{\prime }} &=&\frac{1}{A}\left( {\small -}\left \vert {\small \kappa 
}_{m-1}\right \vert N_{1}+{\small \kappa }_{m-2}N_{3}\right)  \label{a7} \\
&&...  \notag \\
N_{m-1}^{^{\prime }} &=&\frac{1}{A}\left( {\small -\kappa }_{2}N_{m-2}%
{\small \mp \delta }_{m}{\small \kappa }_{1}N_{m}\right)  \notag \\
N_{m}^{^{\prime }} &=&\frac{1}{A}{\small \mp \delta }_{m}{\small \kappa }%
_{1}N_{m-1}  \notag
\end{eqnarray}

where $A=\left \vert c_{m}^{\prime }+c_{m-1}\kappa _{m}\right \vert .$

\begin{corollary}
Let $\gamma =\gamma (s)$ be a regular generic curve in $\mathbb{E}^{m+1}$
and $C_{\gamma }:s\rightarrow C_{\gamma }(s)$ be focal representation of $%
\gamma .$ Then the Frenet frame of $C_{\gamma }$ becomes as follows;

i) If $m$ is even 
\begin{eqnarray}
T &=&n_{m}  \notag \\
N_{1} &=&-n_{m-1}  \notag \\
N_{2} &=&n_{m-2}  \notag \\
&&...  \label{a8} \\
N_{m-1} &=&-n_{1}  \notag \\
N_{m} &=&t  \notag
\end{eqnarray}

ii) If $m$ is odd,%
\begin{eqnarray}
T &=&n_{m}  \notag \\
N_{1} &=&-n_{m-1}  \notag \\
N_{2} &=&n_{m-2}  \notag \\
&&...  \label{a9} \\
N_{m-1} &=&n_{1}  \notag \\
N_{m} &=&-t.  \notag
\end{eqnarray}
\end{corollary}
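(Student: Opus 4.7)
The plan is to derive the Corollary directly from the preceding Theorem, whose formulas
$T=\varepsilon\,n_{m}$, $N_{\alpha}=\delta_{\alpha}n_{m-\alpha}$ for $\alpha=1,\ldots,m-1$, and $N_{m}=\pm t$ already pin down the focal Frenet vectors up to three pieces of sign data: the scalar $\varepsilon$, the $\delta_{\alpha}$'s, and the final $\pm$ in front of $t$. So the entire task reduces to evaluating these signs explicitly under the conventions of Section 2.

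First I would observe that the Frenet curvatures $\kappa_{1},\ldots,\kappa_{m}$ produced by the Gram--Schmidt formulas (\ref{A2}) are all strictly positive, being ratios of norms. Combined with the working orientation in which $\varepsilon(s)=\mathrm{sign}(c_{m}^{\prime}+c_{m-1}\kappa_{m})=+1$, the definition $\delta_{\alpha}=\mathrm{sign}\bigl((-1)^{\alpha}\varepsilon\,\kappa_{m}\bigr)$ collapses to $\delta_{\alpha}=(-1)^{\alpha}$. Substituting into the Theorem yields $T=n_{m}$ and $N_{\alpha}=(-1)^{\alpha}n_{m-\alpha}$ for $\alpha=1,\ldots,m-1$, which already matches the first $m$ lines of both lists (\ref{a8}) and (\ref{a9}).

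Next I would determine the last sign by requiring $\{T,N_{1},\ldots,N_{m}\}$ to be a positive basis of $\mathbb{E}^{m+1}$, as is demanded in part (a) of the Theorem. Writing $N_{m}=\sigma t$ with $\sigma\in\{+1,-1\}$ and expressing the new frame in terms of the positively-oriented frame $\{t,n_{1},\ldots,n_{m}\}$, the change-of-basis matrix is an anti-diagonal signed permutation matrix associated with the reverse permutation on $m+1$ letters. Its determinant factors as the sign of that permutation, $(-1)^{m(m+1)/2}$, times the product of the nonzero entries $\sigma\cdot\prod_{\alpha=1}^{m-1}(-1)^{\alpha}=\sigma\,(-1)^{m(m-1)/2}$, giving altogether $(-1)^{m^{2}}\sigma=(-1)^{m}\sigma$. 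Setting this equal to $+1$ forces $\sigma=(-1)^{m}$, that is $N_{m}=+t$ when $m$ is even and $N_{m}=-t$ when $m$ is odd, which is precisely the split between cases (i) and (ii) of the Corollary.

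The argument is essentially a substitution into the previous Theorem, so the main (only) bookkeeping hurdle is the parity computation for the determinant of the anti-diagonal signed permutation matrix; once that is done, cases (i) and (ii) fall out automatically.
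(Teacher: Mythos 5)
Your proof is correct and follows essentially the same route as the paper: it reads the focal Frenet frame off part (a) of Theorem 4 and then pins down the remaining signs. The paper's own proof is only a one-line appeal to (\ref{a6}) and (\ref{a7}), so your explicit evaluation $\delta _{\alpha }=(-1)^{\alpha }$ (using $\varepsilon =+1$ and the positivity of the Gram--Schmidt curvatures) together with the orientation computation $(-1)^{m}\sigma =+1$, forcing $N_{m}=(-1)^{m}t$, supplies exactly the bookkeeping the paper omits, and it checks out against both cases (i) and (ii).
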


\begin{proof}
By the use of (\ref{a6}) with (\ref{a7}) we get the result.
\end{proof}

\section{k-Slant Helices}

Let $\gamma =\gamma (s):I\rightarrow $ $\mathbb{E}^{m+1}$ be a regular
generic curve given with arclenth parameter. Further, let $\overrightarrow{U}
$ be a unit vector field in $\mathbb{E}^{m+1}$ such that for each $s\in I$ \
the vector $\overrightarrow{U}$ is expressed as the linear combinations of
the orthogonal basis $\left \{ V_{1}(s),V_{2}(s),...,V_{m+1}(s)\right \} $
with%
\begin{equation}
\overrightarrow{U}=\underset{j=1}{\overset{m+1}{\dsum }}a_{j}(s)V_{j}(s).
\label{D1}
\end{equation}%
where $a_{j}(s)$ are differentiable functions $a_{k}(s)$, $1\leq j\leq m+1.$

Differentiating $\overrightarrow{U}$ and using the Frenet equations (\ref{A1}%
) one can get

\begin{equation}
\frac{d\overrightarrow{U}}{ds}=\underset{i=1}{\overset{m+1}{\dsum }}%
P_{i}(s)V_{i}(s),  \label{D2}
\end{equation}%
where 
\begin{eqnarray}
P_{1}(s) &=&a_{1}^{^{\prime }}-\kappa _{1}a_{2},  \label{D3} \\
P_{i}(s) &=&a_{i}^{^{\prime }}+\kappa _{i-1}a_{i-1}-\kappa _{i}a_{i+1},\text{
}2\leq i\leq m,  \notag \\
P_{m+1}(s) &=&a_{m+1}^{^{\prime }}+\kappa _{m}a_{m}  \notag
\end{eqnarray}

If the vector field $\overrightarrow{U}$ is constant then the following
system of ordinary differential equations are obtained 
\begin{eqnarray}
0 &=&a_{1}^{^{\prime }}-\kappa _{1}a_{2},  \notag \\
0 &=&a_{2}^{^{\prime }}+\kappa _{1}a_{1}-\kappa _{2}a_{3},  \label{C4} \\
0 &=&a_{i}^{^{\prime }}+\kappa _{i-1}a_{i-1}-\kappa _{i}a_{i+1},\text{ }%
3\leq i\leq m,  \notag \\
0 &=&a_{m+1}^{^{\prime }}+\kappa _{m}a_{m}.  \notag
\end{eqnarray}

\begin{definition}
Recall that a unit speed generic curve $\gamma =\gamma (s):I\rightarrow $ $%
\mathbb{E}^{m+1}$ is called a $k$-type slant helix if the vector field $%
V_{k} $ ($1\leq k\leq m+1$) makes a constant angle $\theta _{k}$ with the
fixed direction $\overrightarrow{U}$ in $\mathbb{E}^{m+1}$, that is 
\begin{equation}
<\overrightarrow{U},V_{k}>=\cos \theta _{k},\text{ }\theta _{k}\neq \frac{%
\pi }{2}\text{. }  \label{C5}
\end{equation}
\end{definition}

1-type slant helix is known as generalized, or cylindrical helix \cite{AL}
or generalized helix \cite{MC}, \cite{Ba}. For the characterization of
generalized helices in $(n+2)$-dimensional Lorentzisan space $\mathbb{L}%
^{n+2}$ see \cite{YH}.

We give the following result;

\begin{theorem}
Let $\gamma =\gamma (s)$ be a regular generic curve in $\mathbb{E}^{m+1}.$
If $C_{\gamma }:s\rightarrow C_{\gamma }(s)$ is a focal representation of $%
\gamma $ then the following statements are valid;

i) If $\gamma $ is $1$-slant helix then the focal representation $C_{\gamma
} $ of $\gamma $ is $(m+1)$-slant helix in $\mathbb{E}^{m+1}.$

ii) If $\gamma $ is $(m+1)$-slant helix then the focal representation $%
C_{\gamma }$ of $\gamma $ is $1$-slant helix in $\mathbb{E}^{m+1}.$

iii) If $\gamma $ is k-slant helix $(2<k<m)$ then the focal representation $%
C_{\gamma }$ of $\gamma $ is $(m-k+2)$-slant helix in $\mathbb{E}^{m+1}.$
\end{theorem}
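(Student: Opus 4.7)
The plan is to piggyback entirely on the explicit frame comparison given in the Corollary just above the theorem, which tells us that, for both parities of $m$, the Frenet frame $\{T,N_1,\ldots,N_m\}$ of the focal curve $C_\gamma$ is, up to signs, the reversal of the frame $\{t,n_1,\ldots,n_m\}$ of $\gamma$: namely $T=n_m$, $N_\alpha=\pm n_{m-\alpha}$ for $1\le\alpha\le m-1$, and $N_m=\pm t$. In the $V$-notation used in the definition of a $k$-slant helix (so $V_1=t$, $V_{j+1}=n_j$ for $\gamma$, and likewise $V_1^{C_\gamma}=T$, $V_{j+1}^{C_\gamma}=N_j$), the corollary compresses into the single index-reversal identity
\[
V_j^{C_\gamma}\;=\;\varepsilon_j\,V_{m-j+2}^{\gamma},\qquad j=1,2,\ldots,m+1,
\]
with each $\varepsilon_j\in\{+1,-1\}$ read off from (\ref{a8}) or (\ref{a9}).

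Once this identity is in hand, the theorem is immediate. Assume $\gamma$ is a $k$-slant helix with axis $\overrightarrow{U}$ and angle $\theta_k$, so $\langle\overrightarrow{U},V_k^{\gamma}\rangle=\cos\theta_k$. I would then take the \emph{same} fixed direction $\overrightarrow{U}$ as the candidate axis for $C_\gamma$ and evaluate, using the identity above with $j=m-k+2$,
\[
\langle\overrightarrow{U},\,V_{m-k+2}^{C_\gamma}\rangle\;=\;\varepsilon_{m-k+2}\,\langle\overrightarrow{U},V_k^{\gamma}\rangle\;=\;\varepsilon_{m-k+2}\cos\theta_k,
\]
which is a constant. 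This shows directly that $C_\gamma$ is an $(m-k+2)$-slant helix, with the same axis $\overrightarrow{U}$ and with angle equal to $\theta_k$ or $\pi-\theta_k$ according to the sign $\varepsilon_{m-k+2}$.

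The three parts (i), (ii), (iii) then follow by specialising $k$: $k=1$ gives $m-k+2=m+1$; $k=m+1$ gives $m-k+2=1$; and $2<k<m$ gives $2<m-k+2<m$. No separate argument is needed for each case.

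The only bookkeeping step is pinning down the sign $\varepsilon_j$ for each $j$ and each parity of $m$ from the two displayed tables in the Corollary, and this is where the proof could look fussy. However, since the $k$-slant condition is insensitive to an overall sign on the frame vector (it merely toggles $\cos\theta_k$ to $-\cos\theta_k$, still a nonzero constant since $\theta_k\ne \pi/2$), these signs do not affect the conclusion; this is why I do not anticipate a genuine obstacle beyond transcribing (\ref{a8})–(\ref{a9}).
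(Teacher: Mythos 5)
Your proposal is correct and follows essentially the same route as the paper: both rest entirely on the Corollary's index-reversal of the Frenet frame ($T=n_m$, $N_\alpha=\pm n_{m-\alpha}$, $N_m=\pm t$), keep the same axis $\overrightarrow{U}$, and observe that $\langle\overrightarrow{U},V_{m-k+2}^{C_\gamma}\rangle=\pm\cos\theta_k$ is constant. The only difference is presentational — you compress the paper's three separately written cases into one identity and make the harmless sign ambiguity explicit, which the paper leaves implicit.
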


\begin{proof}
i) Suppose $\gamma $ is a $1$-slant helix in $\mathbb{E}^{m+1}.$ Then by
Definition $6$ the vector field $V_{1}$ makes a constant angle $\theta _{1}$
with the fixed direction $\overrightarrow{U}$ defined in (\ref{D1}), that is 
\begin{equation}
<\overrightarrow{U},V_{1}>=\cos \theta _{1},\text{ }\theta _{1}\neq \frac{%
\pi }{2}\text{. }  \label{C6}
\end{equation}%
For a focal representation $C_{\gamma }(s)$ of $\gamma $ we can chose the
orthogonal basis 
\begin{equation*}
\left \{ V_{1}(s)=t,V_{2}(s)=n_{1},...,V_{m+1}(s)=n_{m}\right \}
\end{equation*}%
such that the equalities (\ref{a8}) or (\ref{a9}) is hold$.$ Hence, we get,%
\begin{equation}
<\overrightarrow{U},V_{1}>=<\overrightarrow{U},t>=<\overrightarrow{U},\pm
N_{m}>=cons.  \label{C7}
\end{equation}%
where $\left \{ T,N_{1},N_{2},...,N_{m}\right \} $ is the Frenet frame of $%
C_{\gamma }$ at point $C_{\gamma }(s).$ From the equality (\ref{C7}) it is
easy to say that $C_{\gamma }$ is a (m+1)-slant helix of $\mathbb{E}^{m+1}.$

ii) Suppose $\gamma $ is a $(m+1)$-slant helix in $\mathbb{E}^{m+1}.$ Then
by Definition $6$ the vector field $V_{m+1}$ makes a constant angle $\theta
_{m+1}$ with the fixed direction $\overrightarrow{U}$ defined in (\ref{D1}),
that is 
\begin{equation}
<\overrightarrow{U},V_{m+1}>=\cos \theta _{m+1},\text{ }\theta _{m+1}\neq 
\frac{\pi }{2}\text{. }  \label{C8}
\end{equation}%
For a focal representation $C_{\gamma }(s)$ of $\gamma $ one can get 
\begin{equation}
<\overrightarrow{U},V_{m+1}>=<\overrightarrow{U},n_{m}>=<\overrightarrow{U}%
,T>=cons.  \label{C9}
\end{equation}%
where $\left \{ V_{1}=t,V_{2}=n_{1},...,V_{m+1}=n_{m}\right \} $ and $%
\left
\{ T,N_{1},N_{2},...,N_{m}\right \} $ are the Frenet frame of $\gamma 
$ and $C_{\gamma }$ respectively$.$ From the equality (\ref{C9}) it is easy
to say that $C_{\gamma }$ is a 1-slant helix of $\mathbb{E}^{m+1}.$

iii) Suppose $\gamma $ is a $k$-slant helix in $\mathbb{E}^{m+1}$ $(2\leq
k\leq m).$ Then by Definition $6$ the vector field $V_{k}$ makes a constant
angle $\theta _{k}$ with the fixed direction $\overrightarrow{U}$ defined in
(\ref{D1}), that is 
\begin{equation}
<\overrightarrow{U},V_{k}>=\cos \theta _{k},\text{ }\theta _{k}\neq \frac{%
\pi }{2}\text{, }2\leq k\leq m\text{.}  \label{C10}
\end{equation}%
Let $C_{\gamma }(s)$ be a focal representation of of $\gamma $. Then using
the equalities (\ref{a8}) or (\ref{a9}) we get 
\begin{equation}
<\overrightarrow{U},V_{k}>=<\overrightarrow{U},n_{k-1}>=<\overrightarrow{U}%
,N_{m-k+1}>=cons.,\text{ }2\leq k\leq m  \label{C11}
\end{equation}%
where 
\begin{equation*}
\left \{ V_{1}=t,V_{2}=n_{1},...,V_{m+1}=n_{m}\right \}
\end{equation*}%
and 
\begin{equation*}
\left \{ \widetilde{V}_{1}=T,\widetilde{V}_{2}=N_{1},...,\widetilde{V}%
_{m-k+2}=N_{m-k+1},...,\widetilde{V}_{m+1}=N_{m}\right \}
\end{equation*}%
are the Frenet frame of $\gamma $ and $C_{\gamma }$ respectively$.$ From the
equality (\ref{C11}) it is easy to say that $C_{\gamma }$ is a ($m-k+2$%
)-slant helix of $\mathbb{E}^{m+1}.$
\end{proof}

\begin{tabular}{l}
G\"{u}nay \"{O}zt\"{u}rk \\ 
Department of Mathematics \\ 
Kocaeli University \\ 
41380, Kocaeli, TURKEY \\ 
e-mail: ogunay@kocaeli.edu.tr%
\end{tabular}

\begin{tabular}{l}
Kadri Arslan \& Bet\"{u}l Bulca \\ 
Department of Mathematics \\ 
Uluda\u{g} University \\ 
16059 Bursa, TURKEY \\ 
e-mail: arslan@uludag.edu.tr \\ 
e-mail: bbulca@uludag.edu.tr%
\end{tabular}

\begin{tabular}{l}
Beng\"{u} Bayram \\ 
Department of Mathematics \\ 
Bal\i kesir University \\ 
Bal\i kesir, TURKEY \\ 
e-mail: benguk@bal\i kesir.edu.tr%
\end{tabular}


\begin{thebibliography}{99}
\bibitem{Al} Ali, A. \emph{Inclined curves in the Euclidean 5-space E}$^{%
\emph{5}}$, J. Advanced Research in Pure Math. 1 (1), 15--22, 2009.

\bibitem{AL} Ahmad T. Ali and Lopez R., \emph{Some Charecterizations of
Cylindrical Helices in }$\mathbb{E}^{n}$. arXiv:0901.3325v1 [math.DG] 21 Jan
2009.

\bibitem{AT} Ahmad T. Ali and Melih Turgut, \emph{Some Characterizations of
Slant Helices in the Euclidean Space }$\mathbb{E}^{n}$, Hacettepe Journal of
Mathematics and Statistics, Volume 39 (3), 327 -- 336, 2010.

\bibitem{Ba} Barros, M. \emph{General helices and a theorem of Lancert},
Proc. Amer. Math. Soc. 125, 1503--1509, 1997.

\bibitem{Ch} Chen, B. Y. \emph{Total mean curvature and submanifolds of fine
type}, World Scientific, Singapore, 1984.

\bibitem{CDV} Chen, B.Y., Deprez, J. and Verheyen, P., \emph{Immersions with
geodesics of 2-type}. Geometry and topology of submanifolds IV, Belgium,
1992.

\bibitem{CIKH} Camc\i , C, Ilarslan, K., Kula, L. and Hac\i saliho\u{g}lu,
H.H., \emph{Harmonic curvatures and generalized helices in }$\mathbb{E}^{%
\emph{n}}$, Chaos, Solitons and Fractals 40, 2590--2596, 2009.

\bibitem{G} Gluck, H., \emph{Higher curvatures of curves in Euclidean space}%
, Am. Math. Monthly 73, 699-704, 1966.

\bibitem{GCH} G\"{o}k, I., Camc\i , \c{C}., Hac\i salihoglu, H.H., $\emph{V}%
_{n}$\emph{\ -slant helices in Euclidean n -space }$\mathbb{E}^{n}$, Math.
Commun., Vol. 14, 317-329, 2009.

\bibitem{Ha} Hayden, H.A. \emph{On a generalized helix in a Riemannian
n-space}. Proc. London Math. Soc., 32, 37-45, 1931.

\bibitem{IT} Izumiya, S. and Takeuchi, N. \emph{New special curves and
developable surfaces}, Turk. J. Math. 28 (2), 531--537, 2004.

\bibitem{KL} Klein, F. and Lie, S., \emph{Uber diejenigen ebenenen kurven
welche durch ein geschlossenes system von einfach unendlich vielen
vartauschbaren linearen Transformationen in sich \"{u}bergehen, }Math. Ann.,
4, 50-84, 1871.

\bibitem{MC} Romero-Fuster, M.C. ,Sanabria-Codesal , E., \emph{Generalized
helices, twistings and flattenings of curves in n-space,} Matematica
Contemporanea, vol. 17, 267-280, 1999.

\bibitem{Mo} Monterde, J., \emph{Curves With Constant Curvature Ratios},
Bull. Mexican Math. Soc. Ser. 3A, 13(1), 177--186, 2007.

\bibitem{OAH} \"{O}zt\"{u}rk, G., Arslan, K. and Hacisalihoglu, H.H. \emph{A
characterization of ccr-curves in }$\mathbb{R}^{m}$, Proc. Estonian Acad.
Sci. 57 (4), 217--224, 2008. \ 

\bibitem{V1} \ Uribe-Vargas R.,\emph{On Vertices,} \emph{Focal Curvatures
and Differential Geometry of Space Curves}, Bull. Brazilian Math. Soc., 36,
285-307, 2005.

\bibitem{V2} Uribe-Vargas R.,\emph{On Singularities, `Perestroikas' and
Differential Geometry of Space Curves}, L'Enseigement Math\'{e}matique, 50,
69-101, 2004.

\bibitem{V3} Uribe-Vargas R., \emph{\ Singularites symplectiques et de
contact en Geometrie differentielle des courbes et des surfaces, }PhD.
Thesis, Universite Paris 7, 2001. (In English).

\bibitem{YH} Yal\i n\i z A.F. and Hac\i saliho\u{g}lu H.H., \emph{Null
Generalized Helices in }$\mathbb{L}^{n+2}$, Bull. Malays. Math. Sci. Soc.
(2), 30(1), 74--85, 2007.
\end{thebibliography}
\end{document}